\newtheorem{theorem}{Theorem}[section]
\newtheorem{corollary}[theorem]{Corollary}
\newtheorem{remark}{Remark}
\title{On nearly K\"{a}hler and  K\"{a}hler-Codazzi type manifolds}
\author{Fernando Etayo\footnote{Departamento de  Matem\'{a}ticas, Estad\'{\i}stica y Computaci\'{o}n.  Universidad de Cantabria. Avda. de los Castros, s/n, 39071 Santander, SPAIN. e-mail: etayof@unican.es}, Araceli deFrancisco\footnote{Departamento de Matem\'{a}ticas. Escuela de Ingenier\'{\i}as Industrial, Inform\'{a}tica y Aeroespacial. Universidad de Le\'{o}n. Campus de Vegazana, 24071 Le\'{o}n, SPAIN. e-mail: afrai@unileon.es} \, and Rafael Santamar\'{\i}a\footnote{Departamento de Matem\'{a}ticas. Escuela de Ingenier\'{\i}as Industrial, Inform\'{a}tica y Aeroespacial. Universidad de Le\'{o}n. Campus de Vegazana, 24071 Le\'{o}n, SPAIN. e-mail: rsans@unileon.es}}
\date{}
\begin{document}
\maketitle

\begin{abstract}
Nearly K\"{a}hler and K\"{a}hler-Codazzi type manifolds are defined in a very similar way.  We prove that nearly K\"{a}hler type manifolds have sense just in Hermitian and para-Hermitian contexts, and that K\"{a}hler-Codazzi type manifolds reduce to K\"{a}hler type manifolds in all the four Hermitian, para-Hermitian, Norden and product Riemannian geometries.
\end{abstract}

{\bf 2010 Mathematics Subject Classification:} 53C15, 53C05, 53C07.

{\bf Keywords:}  $(J^2=\pm1)$-metric manifold, first canonical connection, Codazzi equation.

\section{Introduction}

Nearly K\"{a}hler manifolds were studied by  many authors (see, for instance, the classical work \cite{gray} of Gray). They form a class between those of almost Hermitian and K\"{a}hler manifolds. The six sphere is a nice example of a nearly K\"{a}hler manifold which is not a K\"{a}hler manifold.  In the almost para-Hermitian case there also exist strict nearly para-K\"{a}hler manifolds. The Libermann's quadric $S^{6}_{3}$ is also a nice example of nearly para-K\"{a}hler non para-K\"{a}hler manifold (see \cite[Ex. 3.7]{gadea}).

In this paper we will prove that the condition defining nearly K\"{a}hler type manifolds when applied in almost Norden or almost product Riemannian manifolds leads to a K\"{a}hler type condition. The same study will be done for the condition defining Codazzi-K\"{a}hler type manifolds in the Norden and product Riemannian cases, proving that it reduces to a K\"{a}hler type condition in all the four geometries, thus there not being strict Codazzi-K\"{a}hler type manifolds. 

The above result is the primary goal of the present paper. Now we  point out other achieved results throughout  the paper and we show its organization at a time. Section \ref{sec:preliminaries} contains the definitions and known results necessary to fulfill the objectives set out. In particular, we will recall the notion of $(J^2=\pm1)$-metric manifold which is a common framework for the four aforementioned geometries. We will also recall the definition of the first canonical connection of a such manifold. In Section \ref{sec:no-characterizations} we will show how the torsion tensor of the first canonical connection of a $(J^2=\pm1)$-metric manifold characterizes K\"{a}hler type and integrable manifolds (see Theorem \ref{thm:toro-characterizations}). In Section \ref{sec:nearly-kahler-manifolds} we will prove that the class of nearly K\"{a}hler type manifolds in the almost Norden and almost product Riemannian cases is the class of the K\"{a}hler type manifolds (see Theorem \ref{thm:nearly-kahler+1}). We will finish this section showing how the torsion tensor of the first canonical connection characterizes the class of nearly K\"{a}hler type manifolds in the almost Hermitian and almost para-Hermitian cases (see Theorem \ref{thm:nearly-kahler-toro-1}).  Section \ref{sec:kahler-codazzi-type-manifolds}  is devoted to study the class of K\"{a}hler-Codazzi type manifolds, previously introduced in Section \ref{sec:preliminaries}, and to achieve the primary goal of this paper (see Theorem \ref{thm:kahler-codazzi}). As a direct consequence of this theorem we will conclude that there are no strict Codazzi-K\"{a}hler type manifolds in the Norden case. Finally, in Section \ref{sec:codazzi-couplings}, we will recall the notion of Codazzi-coupled connection. In \cite{fei-zhang}, the authors introduce the notion of Codazzi-coupled connection on almost Hermitian or almost para-Hermitian manifolds as a connection, not necessary torsion free, that fullfills the Codazzi-coupled conditions \eqref{eq:codazzi-coupled}. We will use one of the main results of the quoted paper to show another demonstration of Theorem \ref{thm:kahler-codazzi} on almost Hermitian and almost para-Hermitian manifolds.

\section{Preliminaries}
\label{sec:preliminaries}

A manifold will be called to have an $(\alpha, \varepsilon)$-structure $(J,g)$ if $J$ is an almost complex ($\alpha =-1$) or almost product ($\alpha =1$) structure and $J$ is an isometry ($\varepsilon =1$) or anti-isometry ($\varepsilon =-1$) of a semi-Riemannian metric $g$. The metric $g$ will be a Riemannian metric if $\varepsilon=1$. It is also said that $(M,J,g)$ is a $(J^{2}=\pm 1)$-metric manifold. Thus, there exist four kinds of $(\alpha ,\varepsilon)$-structures according to the values $\alpha ,\varepsilon \in \{-1,1\}$, where
\begin{equation}
\label{eq:ae-structure}
J^2 = \alpha Id, \enspace g(JX,JY)= \varepsilon g(X,Y), \enspace \forall X, Y \in \mathfrak{X}(M),
\end{equation}
$Id$ denotes the identity tensor field and $\mathfrak{X}(M)$ denotes the set of vector fields on $M$. 

In the above conditions, it is easy to prove the following equivalence
\begin{equation}
\label{eq:alternative-metric-condition}
g(JX,JY)=\varepsilon g(X,Y) \Longleftrightarrow g(JX,Y)=\alpha\varepsilon g(X, JY), \enspace \forall X, Y \in \mathfrak{X}(M).
\end{equation}

The corresponding manifolds are known as:
\begin{enumerate}
\renewcommand*{\theenumi}{\roman{enumi})}
\renewcommand*{\labelenumi}{\theenumi}

\item  Almost-Hermitian manifold if it has a  $(-1,1)$-structure. 

\item Almost product Riemannian manifolds if it has an $(1,1)$-structure. We shall consider through this paper that the trace of $J$ vanishing, which in particular means these manifolds have even dimension. Almost para-Norden manifolds is another denomination for this kind of metric manifolds having even dimension (see, {\em e.g.}, \cite[Defin. 2.1]{ida-manea}). 

\item Almost anti-Hermitian or almost Norden manifolds if it has a $(-1,-1)$-structure. 

\item Almost para-Hermitian manifolds if it has an $(1,-1)$-structure. 
\end{enumerate}
In the last two cases the metric $g$ is semi-Riemannian having signature $(n,n)$.

K\"{a}hler type and integrable manifolds are the most studied classes in each of the four geometries 
aforementioned. Let $(M,J,g)$ be a manifold  endowed with an $(\alpha,\varepsilon)$-structure. Let us denote by $\nabla^{\mathrm{g}}$ the Levi Civita connection of $g$. The manifold $(M,J,g)$ is said to be a K\"{a}hler type manifold if $\nabla^{\mathrm{g}}J=0$. The manifold $(M,J,g)$ is said to be integrable if the Nijenhuis tensor of the tensor field $J$ vanishes. 

It is well-known that the Nijenhuis tensor of $J$, $N_J$, and the Levi Civita connection of $g$ satisfy the next relation
\begin{equation}
\label{eq:nijenhuis-nablaJ}
N_J(X,Y)=(\nabla^{\mathrm{g}}_X J) JY + (\nabla^{\mathrm{g}}_{JX} J) Y - (\nabla^{\mathrm{g}}_Y J)JX- (\nabla^{\mathrm{g}}_{JY} J) X, \enspace \forall X, Y \in \mathfrak{X}(M).
\end{equation}
Then, the Levi Civita also plays an important role in the characterization of integrable manifolds. A $(J^2=\pm1)$-metric manifold $(M,J,g)$ is an integrable manifold if and only if equation \eqref{eq:nijenhuis-nablaJ} vanishes.

Nevertheless, there exist other connections that allow to characterize the above two classes. One of them is the first canonical connection, firstly introduced in the Hermitian geometry  as follows
\begin{equation*}
\nabla^0_X Y = \nabla^{\mathrm{g}}_X Y + \frac{1}{2} (\nabla^{\mathrm{g}}_X J) JY, \enspace X, Y \in \mathfrak{X}(M),
\end{equation*}
(see \cite{gauduchon,lichnerowicz}), and later extended to the other three $(\alpha,\varepsilon)$ geometries as
\begin{equation}
\label{eq:first-canonical}
\nabla^{0}_X Y= \nabla^{\mathrm{g}}_X Y +\frac{(-\alpha)}{2} (\nabla^{\mathrm{g}}_X J)JY,   \enspace \forall X,Y \in \mathfrak{X} (M),
\end{equation}
(see \cite{brno}). 

If $\alpha\varepsilon=-1$, i.e., in the almost Hermitian and the almost para-Hermitian cases, there is another class of manifolds carefully studied as well: nearly K\"{a}hler type manifolds. This class can be characterized with the help of the Levi Civita connection by one of the  two equivalent conditions
\begin{equation}
\label{eq:nearly-Kahler}
(\nabla^{\mathrm{g}}_X J)X=0, \enspace (\nabla^{\mathrm{g}}_X J)Y + (\nabla^{\mathrm{g}}_Y J)X=0, \enspace \forall X, Y \in \mathfrak{X}(M).
\end{equation}
This class of manifolds can be introduced on $(J^2=\pm1)$-metric manifolds in the same way, i.e.,  a $(J^2=\pm1)$-metric manifold  $(M,J,g)$ will be called a nearly K\"{a}hler type manifold if the Levi Civita connection $\nabla^{\mathrm{g}}$ and the tensor field $J$ satisfy the equivalent conditions \eqref{eq:nearly-Kahler}. 

The common technique that allows to classify the classes of manifolds in the four geometries corresponding with the notion of $(J^2=\pm1)$-metric manifold is based in the study of the vectorial subspaces of the next subspace of $V^*\otimes V^*\otimes V^*$
\[
W=\{ \varphi \in V^*\otimes V^*\otimes V^*\colon \varphi(x,y,z)=\alpha\varepsilon \varphi (x,z,y),  \varphi(x,Jy,z)=-\alpha\varepsilon \varphi (x,y,Jz), \forall x, y, z \in V \},
\]
where $(V, J,<,>)$ is $2n$-dimensional real vectorial space, $J$ is an endomorphism of $V$ and $<,>$ is an inner product satisfying the same identities that $J$ and $g$ described in \eqref{eq:ae-structure}  (see \cite{gadea,gray-hervella,ganchev-borisov,staikova}). Note that the elements of the subspace $W$ have the same symmetries that the tensor $g((\nabla^{\mathrm{g}}_X J)Y,Z), X, Y,Z \in \mathfrak{X}(M)$ (see equations \eqref{eq:nablaJ2} and \eqref{eq:nablaJ3}). In the case $\alpha\varepsilon=-1$,  the subspace $W_1$ of $W$ defined as follows 
\[
W_1=\{\varphi \in W \colon \varphi(x,x,y)=0, \forall x, y \in V\},
\]
allows to introduce the class of nearly K\"{a}hler type manifolds. It is easy to prove that the subspace $W_1$ also can be defined in the next way
\[
W_1=\{\varphi \in W \colon \varphi(x,y,z)+\varphi(y,x,z)=0, \forall x, y, z \in V\}.
\]
Both definitions correspond with the equivalent conditions \eqref{eq:nearly-Kahler}. In the case $\alpha\varepsilon=1$, the subspace $W_1$ defined as above is the zero subspace. Indeed, given $\varphi \in W_1$ and $x,y,z \in V$, one has
\[\varphi(x,y,z)= \varphi(x,z,y)=-\varphi(z,x,y), \enspace 
\varphi(x,y,z)=-\varphi(y,x,z)=-\varphi(y,z,x)=\varphi(z,y,x)=\varphi(z,x,y),
\]
then $\varphi(x,y,z)=0$, thus $W_1=\{0\}$. This fact explains why nearly K\"{a}hler type manifolds in the case $\alpha\varepsilon=1$ are K\"{a}hler type manifolds.

In the Mathematical Literature one can find other attempts to introduce in the case $\alpha\varepsilon=1$ an analogous class to the nearly K\"{a}hler one if $\alpha\varepsilon=-1$. Given an almost Hermitian or almost para-Hermitian manifold $(M,J,g)$, one can define the fundamental two form 
\begin{equation*}
\omega(X,Y)=g(JX,Y), \enspace \forall X, Y \in \mathfrak{X}(M).
\end{equation*}
Thus, nearly K\"{a}hler type manifolds can also be characterized with the help of the fundamental two form  by the condition
\begin{equation*}
(\nabla^{\mathrm{g}}_X \omega)(Y,Z)+ (\nabla^{\mathrm{g}}_Y \omega)(X,Z)=0, \enspace \forall X, Y, Z \in \mathfrak{X}(M).
\end{equation*}
Almost anti-Hermitian and almost product Riemannian cases are quite different. Given a manifold $(M,J,g)$ in the previous conditions, the tensor
\begin{equation*}
\widetilde{g}(X,Y)=g(JX,Y), \enspace \forall X, Y \in \mathfrak{X}(M),
\end{equation*}
defines another (semi)-Riemannian metric instead of a two form like in the $\alpha\varepsilon=-1$ case, called the twin metric of $g$. The following equality
\begin{equation}
\label{eq:codazzi-equation}
(\nabla^{\mathrm{g}}_X \widetilde{g})(Y,Z)- (\nabla^{\mathrm{g}}_Y \widetilde{g})(X,Z)=0, \enspace \forall X, Y, Z \in \mathfrak{X}(M),
\end{equation}
is called the Codazzi equation. In the case $\alpha=\varepsilon=-1$, manifolds satisfying the Codazzi equation are called  anti-K\"{a}hler-Codazzi manifolds, while in the case $\alpha=\varepsilon=1$, this kind of manifolds are called para-K\"{a}hler-Norden-Codazzi manifolds. First, in the Norden case, they were introduced  in \cite{ST} and have been intensively studied (see also \cite{S14,S18}). Afterward, the class of manifolds characterized by the Codazzi equation were extended without changes in \cite{ida-manea} to the other $\alpha\varepsilon=1$ case, the product Riemannian case.

It is easy to prove that Codazzi equation \eqref{eq:codazzi-equation} is equivalent to the next one
\begin{equation}
\label{eq:codazzi-equation-2}
(\nabla^{\mathrm{g}}_{X}J)Y-(\nabla^{\mathrm{g}}_{Y}J)X= 0,\enspace \forall X,Y\in \mathfrak{X}(M).
\end{equation}

For the sake of simplicity a $(J^2=\pm1)$-metric manifold such that $\alpha\varepsilon=1$ satisfying   equation \eqref{eq:codazzi-equation-2} will be named a K\"{a}hler-Codazzi type manifold.
 
\section{Characterizations of K\"{a}hler type and integrable manifolds by means of the first canonical connection}
\label{sec:no-characterizations}

The first canonical connection of a $(J^2=\pm1)$-metric manifold $(M,J,g)$ parallelizes both $J$ and $g$, i.e., $\nabla^0 J=0$ and $\nabla^0 g=0$ (see \cite[Lemma 3.10]{brno}), but in general has torsion.  As direct consequence of identity \eqref{eq:first-canonical} one can prove that the torsion tensor $\mathrm{T}^0$ of $\nabla^0$ satisfies the following one
\begin{equation}
\label{eq:torsion0}
\mathrm{T}^{0}(X,Y)=\frac{(-\alpha)}{2} ((\nabla^{\mathrm{g}}_X J)JY-(\nabla^{\mathrm{g}}_Y J) JX),   \enspace\forall X,Y \in \mathfrak{X} (M).
\end{equation}
Therefore, straightforward calculations allow to conclude that the tensors $\mathrm{T}^0$ and $N_J$ are related by the next equality
\begin{equation*}
-\frac{1}{2} N_J(X,Y)=\mathrm{T}^{0}(JX,JY) +\alpha \mathrm{T}^{0} (X,Y), \enspace \forall X, Y \in \mathfrak{X}(M).
\end{equation*} 
The above properties and identities allow to characterize K\"{a}hler type and integrable manifolds as follows.

\begin{theorem}
\label{thm:toro-characterizations}
 Let $(M,J,g)$ be a $(J^2=\pm1)$-metric manifold. 
\begin{enumerate}
\renewcommand*{\theenumi}{\roman{enumi})}
\renewcommand*{\labelenumi}{\theenumi}

\item The manifold $(M,J,g)$ is a K\"{a}hler type manifold if and only if the torsion tensor of the first canonical connection vanishes.

\item The manifold $(M,J,g)$ is an integrable manifold if and only if the next relation holds
\[
\mathrm{T}^{0}(JX,JY) +\alpha \mathrm{T}^{0} (X,Y)=0, \enspace \forall X, Y \in \mathfrak{X}(M).
\]
\end{enumerate}

\end{theorem}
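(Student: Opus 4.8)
The plan is to reduce both statements to facts already recorded in the excerpt, so that essentially no new computation is needed. For part i) I would route everything through the identification $\nabla^0=\nabla^{\mathrm{g}}$. The forward implication is immediate: if $(M,J,g)$ is of K\"ahler type then $\nabla^{\mathrm{g}}J=0$, so \eqref{eq:first-canonical} gives $\nabla^0=\nabla^{\mathrm{g}}$ and \eqref{eq:torsion0} gives $\mathrm{T}^0=0$. For the converse I would use that $\nabla^0$ is always a metric connection, $\nabla^0 g=0$ (this is \cite[Lemma 3.10]{brno}); if in addition $\mathrm{T}^0=0$, then $\nabla^0$ is a torsion-free metric connection, hence, by the uniqueness clause of the fundamental theorem of (semi-)Riemannian geometry, $\nabla^0=\nabla^{\mathrm{g}}$. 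Substituting back into \eqref{eq:first-canonical} yields $(\nabla^{\mathrm{g}}_X J)JY=0$ for all $X,Y\in\mathfrak{X}(M)$, and since $J$ is a pointwise isomorphism this is the same as $\nabla^{\mathrm{g}}J=0$. So $(M,J,g)$ is of K\"ahler type, proving i).

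If one prefers not to invoke the uniqueness theorem, there is a purely algebraic route for the converse that I would include as a remark. Set $\varphi(X,Y,Z)=g((\nabla^{\mathrm{g}}_X J)Y,Z)$; as noted in Section \ref{sec:preliminaries}, $\varphi$ enjoys the two symmetries that define the subspace $W$. The hypothesis $\mathrm{T}^0=0$ together with \eqref{eq:torsion0} says exactly that $\tau(X,Y,Z):=\varphi(X,JY,Z)$ is symmetric in its first two arguments, whereas the $W$-symmetries force $\tau$ to be skew in its last two arguments. The standard three-term permutation argument for a tensor that is symmetric in positions $(1,2)$ and skew in positions $(2,3)$ then gives $\tau\equiv 0$, hence $(\nabla^{\mathrm{g}}_X J)JY=0$ and again $\nabla^{\mathrm{g}}J=0$.

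Part ii) I would derive directly from the relation $-\frac{1}{2}N_J(X,Y)=\mathrm{T}^0(JX,JY)+\alpha\,\mathrm{T}^0(X,Y)$ displayed just before the statement. By definition $(M,J,g)$ is integrable precisely when $N_J\equiv 0$; since the right-hand side of that relation vanishes identically if and only if $\mathrm{T}^0(JX,JY)+\alpha\,\mathrm{T}^0(X,Y)=0$ for all $X,Y$, the claimed equivalence follows with no further work.

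The only step I expect to require any care is the converse in part i): the argument hinges on having $\nabla^0 g=0$ available, so that ``torsion-free'' can be upgraded to ``Levi Civita connection'', and on the invertibility of $J$ to pass from $(\nabla^{\mathrm{g}}_X J)JY=0$ to $\nabla^{\mathrm{g}}J=0$. Both are at hand here --- the first from \cite[Lemma 3.10]{brno}, the second from the very definition of an $(\alpha,\varepsilon)$-structure --- so in the end both items are short consequences of \eqref{eq:first-canonical}, \eqref{eq:torsion0}, and the $N_J$--$\mathrm{T}^0$ relation. I would present the uniqueness argument as the main proof of i) and the permutation argument as a self-contained alternative.
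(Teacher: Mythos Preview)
Your proposal is correct and matches the approach the paper implicitly suggests: the paper gives no explicit proof, but indicates that the theorem follows directly from the properties $\nabla^0 J=0$, $\nabla^0 g=0$, the torsion formula \eqref{eq:torsion0}, and the displayed $N_J$--$\mathrm{T}^0$ relation. Your route for the converse in i) via uniqueness of the Levi--Civita connection is exactly what those properties invite; note that once you have $\nabla^0=\nabla^{\mathrm{g}}$ you could also conclude $\nabla^{\mathrm{g}}J=0$ in one stroke from $\nabla^0 J=0$, rather than substituting back into \eqref{eq:first-canonical} and using invertibility of $J$.
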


\begin{remark} We finish this section highlighting the next property of the tensor field $\nabla^{\mathrm{g}}J$ on a $(J^2=\pm1)$-metric manifold:
\begin{equation}
\label{eq:nablaJ}
(\nabla^{\mathrm{g}}_{X}J)JY =-J(\nabla^{\mathrm{g}}_{X}J)Y, \enspace \forall X, Y \in \mathfrak{X}(M).
\end{equation}
Indeed, given $X, Y$ vectors fields on $M$ and taking into account that $J^2=\alpha Id$, one has 
\begin{equation*}
(\nabla^{\mathrm{g}}_{X}J)JY =  \alpha \nabla^{\mathrm{g}}_{X} Y - J  \nabla^{\mathrm{g}}_{X} JY = J^2 (\nabla^{\mathrm{g}}_{X} Y) - J  \nabla^{\mathrm{g}}_{X} JY = J \left(J\nabla^{\mathrm{g}}_{X} Y -  \nabla^{\mathrm{g}}_{X} JY\right)=-J(\nabla^{\mathrm{g}}_{X}J)Y.
\end{equation*}

Identities \eqref{eq:torsion0} and \eqref{eq:nablaJ} allow us to write the torsion tensor of $\nabla^0$ as follows
\begin{equation}
\label{eq:torsion0-2}
\mathrm{T}^{0}(X,Y)=\frac{\alpha}{2}J\left((\nabla^{\mathrm{g}}_{X}J)Y-(\nabla^{\mathrm{g}}_{Y}J)X\right), \enspace \forall X, Y \in \mathfrak{X}(M).
\end{equation}
\end{remark}

\section{The class of nearly-K\"{a}hler type manifolds}
\label{sec:nearly-kahler-manifolds}

First we recall the following two properties of manifolds endowed with an $(\alpha,\varepsilon)$ structure.

\begin{remark} Let $(M,J,g)$ be $(J^2=\pm1)$-metric. Given $X,Y,Z$ vector fields on $M$, then taking into account  the equivalence \eqref{eq:alternative-metric-condition} one has the following identities
\begin{align*}
0=(\nabla^{\mathrm{g}}_X g)(JY,Z) &= Xg(JY,Z)-g(\nabla^{\mathrm{g}}_X JY, Z)-g(JY,\nabla^{\mathrm{g}}_X Z)  \\
&= Xg(JY,Z)-g(\nabla^{\mathrm{g}}_X JY, Z)- \alpha\varepsilon g(Y,J \nabla^{\mathrm{g}}_X Z),  \\
0=\alpha\varepsilon (\nabla^{\mathrm{g}}_X g)(Y,JZ) &= \alpha\varepsilon \left( Xg(Y,JZ)-g(\nabla^{\mathrm{g}}_X Y, JZ)-g(Y,\nabla^{\mathrm{g}}_X JZ)\right)  \\
&= Xg(JY,Z)- g(J\nabla^{\mathrm{g}}_X Y, Z)- \alpha\varepsilon g(Y, \nabla^{\mathrm{g}}_X JZ),
\end{align*}
then subtracting the above equalities one obtains
\begin{equation}
\label{eq:nablaJ2}
g((\nabla^{\mathrm{g}}_X J)Y,Z) = \alpha\varepsilon g((\nabla^{\mathrm{g}}_X J)Z,Y).
\end{equation}
Moreover, if one combines the properties \eqref{eq:alternative-metric-condition}  and \eqref{eq:nablaJ} one also obtains the next identity
\begin{equation}
\label{eq:nablaJ3}
g((\nabla^{\mathrm{g}}_X J)JY,Z)=- g(J(\nabla^{\mathrm{g}}_X J)Y,Z) = - \alpha\varepsilon g((\nabla^{\mathrm{g}}_X J)Y,JZ), \enspace \forall X, Y, Z \in \mathfrak{X}(M).
\end{equation}
\end{remark}

As we have recalled in the Preliminaries, nearly K\"{a}hler type manifolds in the almost Hermitian and the almost para-Hermitian cases can be characterized by one of the two equivalent conditions \eqref{eq:nearly-Kahler}. Now, we will study the class of manifolds satisfying one of these two conditions in the case $\alpha\varepsilon=1$.

\begin{theorem}
\label{thm:nearly-kahler+1}
 Let $(M,J,g)$ be a $(J^2=\pm1)$-metric manifold satisfying $\alpha\varepsilon=1$. If the Levi Civita connection $\nabla^{\mathrm{g}}$ and the tensor field $J$ satisfy 
\[
(\nabla^{\mathrm{g}}_X J)Y + (\nabla^{\mathrm{g}}_Y J)X=0, \enspace \forall X, Y \in \mathfrak{X}(M),
\]
then $(M,J,g)$ is a K\"{a}hler type manifold.
\end{theorem}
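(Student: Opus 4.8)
The plan is to reduce the statement to the pointwise linear-algebra fact already recorded in Section~\ref{sec:preliminaries}, namely that the subspace $W_1$ is trivial when $\alpha\varepsilon=1$. Concretely, I would set $\varphi(X,Y,Z)=g((\nabla^{\mathrm{g}}_X J)Y,Z)$ for $X,Y,Z\in\mathfrak{X}(M)$ and collect the symmetries of $\varphi$ coming from the preceding remark. By \eqref{eq:nablaJ2} one has $\varphi(X,Y,Z)=\alpha\varepsilon\,\varphi(X,Z,Y)$, so under the hypothesis $\alpha\varepsilon=1$ the tensor $\varphi$ is \emph{symmetric} in its last two arguments. (Identity \eqref{eq:nablaJ3} gives the $J$-symmetry $\varphi(X,JY,Z)=-\alpha\varepsilon\,\varphi(X,Y,JZ)$, but this will not even be needed here.)

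Next I would feed in the hypothesis of the theorem. The assumption $(\nabla^{\mathrm{g}}_X J)Y+(\nabla^{\mathrm{g}}_Y J)X=0$ says exactly that $\varphi$ is \emph{antisymmetric} in its first two arguments: $\varphi(X,Y,Z)=-\varphi(Y,X,Z)$. Having simultaneously symmetry in the last pair of slots and antisymmetry in the first pair of slots, I would run the standard six-term chain
\[
\varphi(X,Y,Z)=\varphi(X,Z,Y)=-\varphi(Z,X,Y)=-\varphi(Z,Y,X)=\varphi(Y,Z,X)=\varphi(Y,X,Z)=-\varphi(X,Y,Z),
\]
whence $2\varphi(X,Y,Z)=0$ and therefore $\varphi\equiv 0$. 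This is precisely the computation displayed in Section~\ref{sec:preliminaries} to prove $W_1=\{0\}$, merely transcribed from the model space $V$ to the tensor $g((\nabla^{\mathrm{g}}_X J)Y,Z)$, which was observed there to carry the same symmetries as the elements of $W$.

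Finally, from $\varphi\equiv 0$ and the nondegeneracy of the (semi-)Riemannian metric $g$ I would conclude that $(\nabla^{\mathrm{g}}_X J)Y=0$ for all $X,Y\in\mathfrak{X}(M)$, that is, $\nabla^{\mathrm{g}}J=0$, which is the definition of a K\"{a}hler type manifold. I do not expect a genuine obstacle: everything used is already in place --- the algebraic identities \eqref{eq:nablaJ2} and \eqref{eq:nablaJ3}, the hypothesis, and nondegeneracy of $g$. The only point deserving an explicit line is to note that the two conditions in \eqref{eq:nearly-Kahler} are equivalent by polarization, so that the hypothesis stated in the theorem is exactly the one to be substituted into $\varphi$; and, if one wishes to stress it, that here $\alpha=\varepsilon$ (the Norden case $\alpha=\varepsilon=-1$ and the product Riemannian case $\alpha=\varepsilon=1$) is the only situation in which $\alpha\varepsilon=1$, so the conclusion covers both.
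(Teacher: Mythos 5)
Your proposal is correct and follows essentially the same route as the paper: the paper's proof of Theorem \ref{thm:nearly-kahler+1} also applies the symmetry $g((\nabla^{\mathrm{g}}_X J)Y,Z)=g((\nabla^{\mathrm{g}}_X J)Z,Y)$ from \eqref{eq:nablaJ2} together with the antisymmetry given by the hypothesis and chains these identities until the tensor is forced to vanish, exactly as in the $W_1=\{0\}$ computation of Section \ref{sec:preliminaries} that you transcribe. Your six-term chain and the concluding appeal to nondegeneracy of $g$ are just a compact rewriting of that same argument.
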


\begin{proof} Given $X,Y,Z$ vector fields on $M$, taking into account the above condition and property \eqref{eq:nablaJ2} in the case $\alpha\varepsilon=1$ one obtains
\begin{align*}
g((\nabla^{\mathrm{g}}_X J) Y, Z) & = g((\nabla^{\mathrm{g}}_X J) Z, Y)=-g (\nabla^{\mathrm{g}}_Z J) X, Y),\\
g((\nabla^{\mathrm{g}}_X J) Y, Z) & = - g((\nabla^{\mathrm{g}}_Y J) X, Z) = -g((\nabla^{\mathrm{g}}_Y J) Z, X) = g((\nabla^{\mathrm{g}}_Z J) Y, X) = g((\nabla^{\mathrm{g}}_Z J) X, Y),
\end{align*}
then $g((\nabla^{\mathrm{g}}_X J) Y, Z)=0$, and thus, one can conclude that $(M,J,g)$ is a K\"{a}hler type manifold.
\end{proof}

As in the case of K\"{a}hler type and integrable $(J^2=\pm1)$-metric manifolds, nearly-K\"{a}hler type manifolds in the $\alpha\varepsilon=-1$ case can be easily characterized by means of the first canonical connection as follows.

\begin{theorem}
\label{thm:nearly-kahler-toro-1}
 Let $(M,J,g)$ be a $(J^2=\pm1)$-metric manifold satisfying $\alpha\varepsilon=-1$. Then $(M,J,g)$ is a nearly K\"{a}hler type manifold if and only if 
\[
g(\mathrm{T}^0(X,Y),X)=0, \enspace X, Y \in \mathfrak{X}(M).
\]

\end{theorem}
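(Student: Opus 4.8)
The plan is to rewrite the scalar $g(\mathrm{T}^{0}(X,Y),X)$ entirely in terms of the tensor $\nabla^{\mathrm{g}}J$, simplify it with the symmetry identities \eqref{eq:nablaJ}, \eqref{eq:nablaJ2} and \eqref{eq:nablaJ3}, and then recognise the nearly K\"{a}hler condition \eqref{eq:nearly-Kahler}. I would start from expression \eqref{eq:torsion0-2}, $\mathrm{T}^{0}(X,Y)=\tfrac{\alpha}{2}J\big((\nabla^{\mathrm{g}}_{X}J)Y-(\nabla^{\mathrm{g}}_{Y}J)X\big)$, and pair it with $X$; using \eqref{eq:alternative-metric-condition} and $\alpha^{2}=1$ this gives
\[
g(\mathrm{T}^{0}(X,Y),X)=\tfrac{\varepsilon}{2}\big(g((\nabla^{\mathrm{g}}_{X}J)Y,JX)-g((\nabla^{\mathrm{g}}_{Y}J)X,JX)\big).
\]
The key preliminary remark is that the second summand vanishes identically: \eqref{eq:nablaJ2} yields $g((\nabla^{\mathrm{g}}_{Y}J)JX,X)=\alpha\varepsilon\,g((\nabla^{\mathrm{g}}_{Y}J)X,JX)$, while \eqref{eq:nablaJ3} yields $g((\nabla^{\mathrm{g}}_{Y}J)JX,X)=-\alpha\varepsilon\,g((\nabla^{\mathrm{g}}_{Y}J)X,JX)$, and comparing the two (recall $(\alpha\varepsilon)^{2}=1$) forces $g((\nabla^{\mathrm{g}}_{Y}J)X,JX)=0$. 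Hence $g(\mathrm{T}^{0}(X,Y),X)=\tfrac{\varepsilon}{2}g((\nabla^{\mathrm{g}}_{X}J)Y,JX)$, so the condition in the statement is equivalent to $g((\nabla^{\mathrm{g}}_{X}J)Y,JX)=0$ for all $X,Y$.

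It then remains to identify this last condition with the nearly K\"{a}hler one. For the implication ``torsion condition $\Rightarrow$ nearly K\"{a}hler'' I would assume $g((\nabla^{\mathrm{g}}_{X}J)Y,JX)=0$ for all $X,Y$, replace $Y$ by $JY$, and use \eqref{eq:nablaJ} together with $g(JU,JV)=\varepsilon g(U,V)$ to obtain $g((\nabla^{\mathrm{g}}_{X}J)Y,X)=0$; then \eqref{eq:nablaJ2} turns this into $g((\nabla^{\mathrm{g}}_{X}J)X,Y)=0$ for all $Y$, that is, $(\nabla^{\mathrm{g}}_{X}J)X=0$. Conversely, if $(M,J,g)$ is nearly K\"{a}hler, I would use the polarised form of \eqref{eq:nearly-Kahler}, $(\nabla^{\mathrm{g}}_{X}J)Y=-(\nabla^{\mathrm{g}}_{Y}J)X$, to write $g((\nabla^{\mathrm{g}}_{X}J)Y,JX)=-g((\nabla^{\mathrm{g}}_{Y}J)X,JX)$, which is $0$ by the preliminary remark; hence $g(\mathrm{T}^{0}(X,Y),X)=0$.

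The only delicate part is the sign bookkeeping with $\alpha$, $\varepsilon$ (here $\alpha\varepsilon=-1$, so $J$ is skew with respect to $g$) and with $\alpha^{2}=\varepsilon^{2}=1$, and applying the three symmetry identities to the correct arguments. The conceptual point, and the step I expect to carry the weight, is the auxiliary identity $g((\nabla^{\mathrm{g}}_{Y}J)X,JX)=0$: it removes the asymmetry between $X$ and $Y$ in $\mathrm{T}^{0}$, so that $g(\mathrm{T}^{0}(\cdot,\cdot),\cdot)$ detects exactly the symmetrised part of $\nabla^{\mathrm{g}}J$, which is precisely what the nearly K\"{a}hler condition annihilates.
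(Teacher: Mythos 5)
Your proposal is correct, and it follows essentially the same route as the paper's proof: both rewrite $g(\mathrm{T}^{0}(X,Y),X)$ via the torsion formula and the symmetries \eqref{eq:nablaJ2}, \eqref{eq:nablaJ3}, use the auxiliary vanishing $g((\nabla^{\mathrm{g}}_{Y}J)X,JX)=0$ to kill the second term, and reduce the condition to $(\nabla^{\mathrm{g}}_{X}J)X=0$. The only cosmetic difference is that you start from \eqref{eq:torsion0-2} and close the argument with the substitution $Y\mapsto JY$, whereas the paper starts from \eqref{eq:torsion0} and lands directly on $\tfrac{\alpha}{2}\,g((\nabla^{\mathrm{g}}_{X}J)X,JY)$.
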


\begin{proof} Given $X, Y$ vector fields on $M$, if $\alpha\varepsilon=-1$ then, taking into account \eqref{eq:nablaJ2} and \eqref{eq:nablaJ3}, one has
\[
g((\nabla^{\mathrm{g}}_{Y})JX, X)= g((\nabla^{\mathrm{g}}_{Y})X, JX) = -g((\nabla^{\mathrm{g}}_{Y})JX, X),
\]
i.e., $g((\nabla^{\mathrm{g}}_{Y})JX, X)=0$. Taking into account \eqref{eq:torsion0}, \eqref{eq:nablaJ2}  and the last identity one obtains the next equality
\[
g(\mathrm{T}^{0}(X,Y),X)=\frac{(-\alpha)}{2} ( g((\nabla^{\mathrm{g}}_{X}J)JY,X)-g((\nabla^{\mathrm{g}}_{Y}J)JX,X))) = \frac{\alpha}{2} g((\nabla^{\mathrm{g}}_{X}J)X,JY), \enspace \forall X, Y \in \mathfrak{X}(M),
\]
thus, one can conclude that the equivalence of this statement is true.
\end{proof}

\section{The class of K\"{a}hler-Codazzi type manifolds in the $\alpha\varepsilon=1$ case}
\label{sec:kahler-codazzi-type-manifolds}

Obviously, in the $\alpha\varepsilon=1$ case, the Levi Civita connection of any K\"{a}hler type manifold satisfies equation \eqref{eq:codazzi-equation-2}, thus any K\"{a}hler type manifold being a K\"{a}hler-Codazzi type manifold. In \cite[Prop. 2.1]{ida-manea}, the authors prove that any para-K\"{a}hler-Norden-Codazzi manifold in the almost product  Riemannian case is a para-K\"{a}hler-Norden manifold, i.e., the Levi-Civita connection satisfies $\nabla^{\mathrm{g}}J=0$. In the almost  anti-Hermitian case there is no analogous result. However, examples of strict anti-K\"{a}hler-Codazzi manifolds in this case, i.e.,  $\nabla^{\mathrm{g}} J\neq0$, are not yet shown.  Now we  prove that every anti-K\"{a}hler-Codazzi manifold is an anti-K\"{a}hler manifold. In general, we prove that every manifold having an $(\alpha,\varepsilon)$-structure such that its Levi Civita connection satisfies identity \eqref{eq:codazzi-equation-2} is a K\"{a}hler type manifold, which is a direct consequence of the last identity of Section \ref{sec:no-characterizations} as we will see below.

\begin{theorem}
\label{thm:kahler-codazzi}
Let $(M,J,g)$ be a $(J^2=\pm1)$-metric manifold. If the Levi Civita connection $\nabla^{\mathrm{g}}$ and the tensor field $J$ satisfy identity \eqref{eq:codazzi-equation-2} then $(M,J,g)$ is a K\"{a}hler type manifold.
\end{theorem}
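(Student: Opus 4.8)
The plan is to derive the statement directly from equation \eqref{eq:torsion0-2} together with part i) of Theorem \ref{thm:toro-characterizations}, with no need to separate cases by the values of $\alpha$ and $\varepsilon$. Recall that \eqref{eq:torsion0-2} expresses the torsion of the first canonical connection as
\[
\mathrm{T}^{0}(X,Y)=\frac{\alpha}{2}J\left((\nabla^{\mathrm{g}}_{X}J)Y-(\nabla^{\mathrm{g}}_{Y}J)X\right), \enspace \forall X, Y \in \mathfrak{X}(M).
\]
If the Levi Civita connection satisfies \eqref{eq:codazzi-equation-2}, that is $(\nabla^{\mathrm{g}}_{X}J)Y-(\nabla^{\mathrm{g}}_{Y}J)X=0$ for all $X,Y$, then the bracket on the right-hand side vanishes identically, and hence $\mathrm{T}^{0}=0$.

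The remaining step is to invoke Theorem \ref{thm:toro-characterizations} i): a $(J^2=\pm1)$-metric manifold is of K\"{a}hler type precisely when the torsion of its first canonical connection vanishes. Applying this to the conclusion $\mathrm{T}^{0}=0$ just obtained yields $\nabla^{\mathrm{g}}J=0$, i.e. $(M,J,g)$ is a K\"{a}hler type manifold, which is what we want. Note in particular that this covers the almost anti-Hermitian ($\alpha=\varepsilon=-1$) case, so there are no strict anti-K\"{a}hler-Codazzi manifolds, and it reproves the product Riemannian statement of \cite[Prop. 2.1]{ida-manea} in a unified fashion.

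There is essentially no obstacle here: the argument is a two-line deduction once \eqref{eq:torsion0-2} and Theorem \ref{thm:toro-characterizations} are in hand. The only point requiring a modicum of care — and it was already settled in the Remark at the end of Section \ref{sec:no-characterizations} — is the passage from \eqref{eq:torsion0} to \eqref{eq:torsion0-2}, which uses the identity \eqref{eq:nablaJ}, valid on any $(J^2=\pm1)$-metric manifold regardless of the signs of $\alpha$ and $\varepsilon$; since $J$ is invertible ($J^2=\alpha\,\mathrm{Id}$ with $\alpha=\pm1$), the factor $\frac{\alpha}{2}J$ in \eqref{eq:torsion0-2} is an isomorphism, so the vanishing of $\mathrm{T}^{0}$ is genuinely equivalent to the vanishing of the bracket $(\nabla^{\mathrm{g}}_{X}J)Y-(\nabla^{\mathrm{g}}_{Y}J)X$, and nothing is lost. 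Thus the proof reduces to quoting the two displayed facts and chaining the implications.
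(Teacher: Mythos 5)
Your proof is correct and follows essentially the same route as the paper: the paper's own proof of Theorem \ref{thm:kahler-codazzi} likewise substitutes the Codazzi condition \eqref{eq:codazzi-equation-2} into \eqref{eq:torsion0-2} to conclude $\mathrm{T}^{0}=0$ and then invokes Theorem \ref{thm:toro-characterizations} i). Your extra remarks on the invertibility of $J$ and the validity of \eqref{eq:nablaJ} for all signs of $\alpha,\varepsilon$ are fine but not needed for this implication.
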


\begin{proof}

If $\nabla^{\mathrm{g}}$ satisfies $(\nabla^{\mathrm{g}}_{X}J)Y-(\nabla^{\mathrm{g}}_{Y}J)X= 0$, for all vector fields $X, Y$ on $M$, then, taking into account \eqref{eq:torsion0-2}, one concludes that the torsion tensor $\mathrm{T}^{0}$ vanishes, and therefore, $(M,J,g)$ is K\"{a}hler type manifold (see Theorem \ref{thm:toro-characterizations}).
\end{proof}

The above theorem shows that every K\"{a}hler-Codazzi type manifold in the case $\alpha\varepsilon=1$ is a K\"{a}hler type manifold. In particular, we obtain that anti-K\"{a}hler-Codazzi manifolds in the sense of \cite{ST} are K\"{a}hler type manifolds.

\begin{corollary} Let $(M,J,g)$ be an anti-Hermitian manifold. Every anti-K\"{a}hler-Codazzi manifold is a anti-K\"{a}hler manifold, i.e., the Levi Civita connection $\nabla^{\mathrm{g}}$ and the tensor field $J$ satisfy $\nabla^{\mathrm{g}}J=0$. Therefore, there are no strict anti-K\"{a}hler-Codazzi manifolds.
\end{corollary}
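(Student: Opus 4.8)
The plan is to observe that the corollary is an immediate specialization of Theorem \ref{thm:kahler-codazzi} to the case $\alpha=\varepsilon=-1$. First I would recall that an anti-Hermitian (almost Norden) manifold is precisely a $(J^2=\pm1)$-metric manifold with $\alpha=\varepsilon=-1$, so that $\alpha\varepsilon=1$ and the framework of the preceding theorem applies. Next I would recall, as was noted in the Preliminaries, that the Codazzi equation \eqref{eq:codazzi-equation} defining anti-K\"{a}hler-Codazzi manifolds in the sense of \cite{ST} is equivalent to identity \eqref{eq:codazzi-equation-2}; hence the Levi Civita connection of an anti-K\"{a}hler-Codazzi manifold satisfies $(\nabla^{\mathrm{g}}_X J)Y-(\nabla^{\mathrm{g}}_Y J)X=0$ for all $X,Y\in\mathfrak{X}(M)$.

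Then I would simply invoke Theorem \ref{thm:kahler-codazzi}: under this hypothesis $(M,J,g)$ is a K\"{a}hler type manifold, which in the anti-Hermitian setting means exactly that it is an anti-K\"{a}hler manifold, i.e.\ $\nabla^{\mathrm{g}}J=0$. Since the condition $\nabla^{\mathrm{g}}J=0$ is strictly stronger than $\nabla^{\mathrm{g}}J\neq0$, there can be no strict anti-K\"{a}hler-Codazzi manifolds, which is the final assertion. The argument is therefore a one-line deduction once the equivalences are in place.

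I do not expect any genuine obstacle here, since all the work has already been done in establishing Theorem \ref{thm:kahler-codazzi} (itself resting on identity \eqref{eq:torsion0-2} and part i) of Theorem \ref{thm:toro-characterizations}). The only point requiring a modicum of care is making explicit that the definition of anti-K\"{a}hler-Codazzi manifold used in \cite{ST}, phrased via the twin metric and equation \eqref{eq:codazzi-equation}, does coincide with the hypothesis \eqref{eq:codazzi-equation-2} of Theorem \ref{thm:kahler-codazzi} — but this is exactly the elementary equivalence already recorded in the Preliminaries, so it can be cited rather than reproved.
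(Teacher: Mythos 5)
Your proposal is correct and matches the paper's own treatment: the corollary is obtained exactly as you describe, as an immediate specialization of Theorem \ref{thm:kahler-codazzi} to the anti-Hermitian case $\alpha=\varepsilon=-1$, using the equivalence of the Codazzi equation \eqref{eq:codazzi-equation} with \eqref{eq:codazzi-equation-2} recorded in the Preliminaries. Only a small phrasing slip: rather than saying $\nabla^{\mathrm{g}}J=0$ is ``strictly stronger than $\nabla^{\mathrm{g}}J\neq0$,'' simply note that since every anti-K\"{a}hler-Codazzi manifold satisfies $\nabla^{\mathrm{g}}J=0$, none can satisfy $\nabla^{\mathrm{g}}J\neq0$, so strict examples cannot exist.
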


\begin{remark} Theorem \ref{thm:kahler-codazzi} allows us to recover the analogous result for the almost product Riemannian case proved in \cite[Prop. 2.1]{ida-manea} and previously recalled: Every para-K\"{a}hler-Norden-Codazzi manifold $(M,J,g)$ is a para-K\"{a}hler-Norden manifold. Ida and Manea use the properties of tensor $g((\nabla^{\mathrm{g}}_X J)Y,Z), X, Y,Z \in \mathfrak{X}(M)$, in the case $\alpha\varepsilon=1$ (see equations \eqref{eq:nablaJ2} and \eqref{eq:nablaJ3}) and identity \eqref{eq:codazzi-equation-2} to demonstrate this in a similar way to the proof of Theorem \ref{thm:nearly-kahler+1} of this paper.
We propose a unified proof of both results in the $\alpha\varepsilon=1$ case taking into account that the equivalent condition to the Codazzi equation \eqref{eq:codazzi-equation}, 
\[
(\nabla^{\mathrm{g}}_{X}J)Y-(\nabla^{\mathrm{g}}_{Y}J)X= 0,\enspace \forall X,Y\in \mathfrak{X}(M),
\]
also allows to show that the torsion of the first canonical connection $\nabla^0$ vanishes,
\[
\mathrm{T}^{0}(X,Y)=\frac{(-\alpha)}{2} ((\nabla^{\mathrm{g}}_X J)JY-(\nabla^{\mathrm{g}}_Y J) JX)=\frac{\alpha}{2}J\left((\nabla^{\mathrm{g}}_{X}J)Y-(\nabla^{\mathrm{g}}_{Y}J)X\right),   \enspace\forall X,Y \in \mathfrak{X} (M).
\]
\end{remark}

\section{On Codazzi couplings on $(J^2=\pm1)$-metric manifolds in the case $\alpha\varepsilon=-1$}
\label{sec:codazzi-couplings}

In a more general setting, in \cite{fei-zhang}, the authors introduce a relaxation of the parallelism conditions over a connection on a certain kind of manifolds. In the particular case of an almost Hermitian or almost para-Hermitian manifold $(M,J,g)$, i.e., in a manifold having an $(\alpha,\varepsilon)$-structure satisfying $\alpha\varepsilon=-1$, they introduce the notion of Codazzi-coupled connection as a connection $\nabla$ on $M$, not necessary torsion-free connection, that satisfies the below conditions, named the Codazzi-coupled conditions,
\begin{equation}
\label{eq:codazzi-coupled} 
(\nabla _{X}J)Y-(\nabla _{Y}J)X= 0, \enspace (\nabla _{Z}g)(X,Y)-(\nabla _{X}g)(Z,Y)=0,\enspace \forall X,Y,Z\in \mathfrak{X}(M). 
\end{equation}
They also prove the following theorem about this kind of manifolds having a Codazzi-coupled torsion-free connection.

\begin{theorem}[{\cite[Theor. 3.2]{fei-zhang}}]
Let $(M,J,g)$ be an almost almost Hermitian or almost para-Hermitian manifold endowed with a torsion-free connection $\nabla$ satisfying
\begin{equation*}
(\nabla _{X}J)Y-(\nabla _{Y}J)X= 0, \enspace (\nabla _{Z}g)(X,Y)-(\nabla _{X}g)(Z,Y)=0,\enspace \forall X,Y,Z\in \mathfrak{X}(M). 
\end{equation*}
Then $(M,J,g)$ is a K\"{a}hler or para-K\"{a}hler manifold.
\end{theorem}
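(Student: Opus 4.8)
The plan is to compare the given torsion-free connection $\nabla$ with the Levi Civita connection $\nabla^{\mathrm{g}}$ through their difference tensor $A(X,Y)=\nabla_X Y-\nabla^{\mathrm{g}}_X Y$, a $(1,2)$-tensor field which is symmetric, $A(X,Y)=A(Y,X)$, because both connections are torsion-free. First I would rewrite the two Codazzi-coupled conditions \eqref{eq:codazzi-coupled} in terms of $A$ and of $\nabla^{\mathrm{g}}J$. Using $\nabla^{\mathrm{g}}g=0$, the metric condition becomes an identity for the $(0,3)$-tensor $\mathcal A(X,Y,Z):=g(A(X,Y),Z)$ which, combined with the symmetry of $A$, forces $\mathcal A$ to be \emph{totally symmetric}. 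Using $(\nabla_X J)Y=(\nabla^{\mathrm{g}}_X J)Y+A(X,JY)-JA(X,Y)$ and once more the symmetry of $A$, the condition on $J$ becomes the key identity
\[
(\nabla^{\mathrm{g}}_X J)Y-(\nabla^{\mathrm{g}}_Y J)X=A(Y,JX)-A(X,JY),\qquad \forall X,Y\in\mathfrak{X}(M).
\]
If its right-hand side vanished identically we would conclude at once from \eqref{eq:codazzi-equation-2} and Theorem \ref{thm:kahler-codazzi}; the point is that it vanishes only after the built-in symmetries of $\nabla^{\mathrm{g}}J$ are taken into account.

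I would then pass to the associated $(0,3)$-tensors. Put $\Phi(X,Y,Z)=g((\nabla^{\mathrm{g}}_X J)Y,Z)$; since $\alpha\varepsilon=-1$ in both the almost Hermitian and the almost para-Hermitian cases, \eqref{eq:nablaJ2} says $\Phi$ is skew-symmetric in its last two arguments and \eqref{eq:nablaJ3} says $\Phi(X,JY,Z)=\Phi(X,Y,JZ)$. Pairing the key identity with $Z$ and moving the $J$ to the first slot by the total symmetry of $\mathcal A$, the tensor $\Psi(X,Y,Z):=\Phi(X,Y,Z)-\Phi(Y,X,Z)$ is seen to satisfy $\Psi(X,Y,Z)=\mathcal A(JX,Y,Z)-\mathcal A(JY,X,Z)$. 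From this expression, the total symmetry of $\mathcal A$ and $J^{2}=\alpha\, Id$, I would extract: the cyclic sum $\Psi(X,Y,Z)+\Psi(Y,Z,X)+\Psi(Z,X,Y)$ vanishes, and $\Psi(JX,Y,Z)=-\Psi(X,JY,Z)$. Combining the vanishing cyclic sum with the skew-symmetry of $\Phi$ in its last two slots one recovers $\Phi(X,Y,Z)=\Psi(X,Y,Z)-\Psi(X,Z,Y)$, so it suffices to prove $\Psi\equiv 0$.

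For the last step I would decompose the tangent spaces into the eigenbundles of $J$ (the $\pm i$-eigenbundles in the complex case, the $\pm 1$-eigenbundles in the para-complex case) and argue by types. The identity $\Psi(JX,Y,Z)=-\Psi(X,JY,Z)$ annihilates every component of $\Psi$ whose first two slots carry the same type, while $\Phi(X,JY,Z)=\Phi(X,Y,JZ)$ annihilates every component of $\Phi$ whose last two slots carry different types. Feeding these restrictions into $\Psi=\Phi-(\text{swap of the first two slots})\,\Phi$ and into the reconstruction formula, each surviving component of $\Phi$ turns out to be simultaneously symmetric and skew-symmetric in a pair of slots — for the "mixed" components through the skew-symmetry of $\Phi$ in its last two slots against the symmetry forced by the cyclic identity, and for the remaining components through a six-term cycle in the spirit of the proof of Theorem \ref{thm:nearly-kahler+1}. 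Hence every component vanishes, $\Psi\equiv 0$, so $\Phi$ is symmetric in its first two slots; being also skew in its last two, the same six-term cycle gives $\Phi\equiv 0$, i.e. $\nabla^{\mathrm{g}}J=0$, and $(M,J,g)$ is a K\"{a}hler or para-K\"{a}hler manifold.

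The main obstacle is exactly this closing bookkeeping: one must orchestrate the skew-symmetry of $\Phi$ in slots $2,3$, the skew-symmetry of $\Psi$ in slots $1,2$, the total symmetry of $\mathcal A$, the cyclic identity and the two $J$-twistings so that each irreducible piece is at once symmetric and skew in a pair of slots, and therefore zero. Everything before it is routine tensor manipulation; this is the step where the interaction of the two Codazzi-coupled conditions is actually used. One can also bypass the eigenbundle decomposition and run the same collision of symmetries with index identities alone, but the type-by-type reasoning makes it most transparent.
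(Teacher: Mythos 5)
Note first that the paper does not prove this statement at all: it is imported verbatim from Fei--Zhang \cite[Theor.~3.2]{fei-zhang} and only used afterwards to re-derive Theorem \ref{thm:kahler-codazzi} in the case $\alpha\varepsilon=-1$, so your attempt must be measured against the proof in \cite{fei-zhang} rather than against anything in this paper. Your reduction is sound: the total symmetry of $\mathcal A$, the key identity, and the two identities you extract from it are all correct, and once unpacked your argument is a self-contained reconstruction of the known proof. Indeed, writing $\Phi(X,Y,Z)=g((\nabla^{\mathrm{g}}_XJ)Y,Z)$ and letting $\Psi$ be its skew part in the first two slots, the skewness of $\Phi$ in the last two slots (equation \eqref{eq:nablaJ2} with $\alpha\varepsilon=-1$) and equation \eqref{eq:nijenhuis-nablaJ} give
\[
\Psi(X,Y,Z)+\Psi(Y,Z,X)+\Psi(Z,X,Y)=2\,d\omega(X,Y,Z),
\qquad
\Psi(JX,Y,Z)+\Psi(X,JY,Z)=g\bigl(N_J(X,Y),Z\bigr),
\]
so your ``cyclic identity'' is exactly $d\omega=0$ and your ``$J$-twisting identity'' is exactly $N_J=0$; these are precisely the two facts Fei and Zhang establish before invoking the classical characterization of (para-)K\"{a}hler manifolds as almost (para-)Hermitian manifolds with integrable $J$ and closed fundamental form. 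Your closing bookkeeping does close, though more simply than you describe: since \eqref{eq:nablaJ3} forces the last two slots of $\Phi$ to carry the same type, the only components are the pure ones (all three slots of one type) and the mixed ones (first slot of the opposite type). A pure component is symmetric in the first two slots (because your $J$-twisting identity kills $\Psi$ on equal types there) and skew in the last two, hence vanishes by the six-term cycle of Theorem \ref{thm:nearly-kahler+1}; a mixed component vanishes \emph{directly} from the cyclic identity, because the other two terms of the cyclic sum have last two slots of different types and are already zero --- no symmetric/skew collision is needed there, contrary to your description of the mixed case. With that small correction the proof is complete; alternatively, having identified $N_J=0$ and $d\omega=0$, you could bypass the type-by-type analysis altogether by quoting the standard identity expressing $2g((\nabla^{\mathrm{g}}_XJ)Y,Z)$ in terms of $d\omega$ and $N_J$, which is exactly the route of \cite{fei-zhang}.
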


An almost Hermitian or almost para-Hermitian manifold endowed with a Codazzi-coupled torsion-free connection is called an Codazzi-K\"{a}hler or Codazzi-para-K\"{a}hler manifold (see \cite[Defin. 3.8]{fei-zhang}).

The above statement is given just for the two geometries obtained with $\alpha \varepsilon =-1$. In this conditions, if the Levi Civita connection of a manifold $(M,J,g)$ having an $(\alpha,\varepsilon)$-structure satisfies the next condition 
\[
(\nabla^{\mathrm{g}}_{X}J)Y-(\nabla^{\mathrm{g}}_{Y}J)X= 0,\enspace \forall X,Y\in \mathfrak{X}(M),
\] 
then it also satisfies  Codazzi-coupled conditions \eqref{eq:codazzi-coupled}, i.e., the Levi Civita connection is a Codazzi-coupled torsion-free connection.  Thus, one can recover our Theorem \ref{thm:kahler-codazzi} from the above one for the case $\alpha \varepsilon =-1$.

\begin{corollary}  Let $(M,J,g)$ be a $(J^2=\pm1)$-metric manifold such that $\alpha \varepsilon =-1$. If the next condition is fulfilled,
\begin{equation*}
(\nabla^{\mathrm{g}}_{X}J)Y-(\nabla^{\mathrm{g}}_{Y}J)X= 0,\enspace \forall X,Y\in \mathfrak{X}(M),
\end{equation*}
then $(M,J,g)$  is a K\"{a}hler type manifold.
\end{corollary}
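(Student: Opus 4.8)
The plan is to deduce the statement directly from the quoted result \cite[Theor.~3.2]{fei-zhang} by exhibiting the Levi Civita connection of $g$ as a torsion-free Codazzi-coupled connection in the sense of \eqref{eq:codazzi-coupled}. So the whole argument amounts to a short verification that the hypotheses of that theorem are met.

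First I would note that, since $\alpha\varepsilon=-1$, the manifold $(M,J,g)$ is either an almost Hermitian manifold ($\alpha=-1$, $\varepsilon=1$) or an almost para-Hermitian manifold ($\alpha=1$, $\varepsilon=-1$), so it sits in the ambient setting of the quoted theorem. Next, take $\nabla=\nabla^{\mathrm{g}}$ in \eqref{eq:codazzi-coupled}. The connection $\nabla^{\mathrm{g}}$ is torsion-free by definition, so that requirement is automatic; the first Codazzi-coupled condition $(\nabla^{\mathrm{g}}_{X}J)Y-(\nabla^{\mathrm{g}}_{Y}J)X=0$ is exactly the assumed identity; and the second Codazzi-coupled condition $(\nabla^{\mathrm{g}}_{Z}g)(X,Y)-(\nabla^{\mathrm{g}}_{X}g)(Z,Y)=0$ holds trivially because the Levi Civita connection is metric, $\nabla^{\mathrm{g}}g=0$, so both terms vanish separately. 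Hence $\nabla^{\mathrm{g}}$ is a torsion-free Codazzi-coupled connection on $(M,J,g)$.

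Then I would invoke the quoted theorem: a manifold in either of these two geometries carrying such a connection is a K\"ahler or para-K\"ahler manifold, that is, $\nabla^{\mathrm{g}}J=0$, which is precisely the definition of a K\"ahler type manifold in the $\alpha\varepsilon=-1$ case (see Section~\ref{sec:preliminaries}). This finishes the proof.

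As for the main obstacle: there is essentially none at the level of this corollary --- everything reduces to matching definitions --- and the real content is packaged inside \cite[Theor.~3.2]{fei-zhang}, which is cited. The only point requiring a moment's care is the terminological identification, namely that ``K\"ahler or para-K\"ahler manifold'' in the quoted statement means the condition $\nabla^{\mathrm{g}}J=0$ used here for ``K\"ahler type manifold''. One could alternatively bypass the Fei--Zhang framework entirely and argue as in Theorem~\ref{thm:kahler-codazzi}, using \eqref{eq:torsion0-2} together with Theorem~\ref{thm:toro-characterizations} to see that the torsion of the first canonical connection vanishes; but the purpose of this section is precisely to show how the result can also be recovered from the notion of Codazzi-coupled connection.
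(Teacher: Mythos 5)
Your proposal is correct and follows essentially the same route as the paper: the corollary is obtained by observing that the Levi Civita connection is torsion-free, metric (so the $g$-Codazzi condition in \eqref{eq:codazzi-coupled} is automatic), and satisfies the $J$-Codazzi condition by hypothesis, hence is a Codazzi-coupled torsion-free connection, after which \cite[Theor.~3.2]{fei-zhang} yields $\nabla^{\mathrm{g}}J=0$, i.e.\ a K\"ahler type manifold. The terminological identification you flag is exactly how the paper reads the quoted theorem, so there is nothing to add.
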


Therefore, K\"{a}hler type manifolds satisfying $\alpha\varepsilon=-1$ are Codazzi-K\"{a}hler o Codazzi-para-K\"{a}hler manifolds such that the Levi Civita connection is a Codazzi-coupled connection.

\medskip

Table \ref{table:conditions} summarizes the results obtained about $(J^{2}=\pm 1)$-metric manifolds satisfying 
\[
(\nabla^{\mathrm{g}}_{X}J)Y + \alpha\varepsilon (\nabla^{\mathrm{g}}_{Y}J)X= 0,\enspace \forall X,Y\in \mathfrak{X}(M),
\]
according to the value of the product $\alpha\varepsilon=\pm1$ through the present paper.

\begin{table}[htb]
\begin{center}
\begin{tabular}{|c|c|c|}
 \hline
 \hline
Condition & $\alpha \varepsilon=-1$ & $\alpha \varepsilon=1$ \\
  \hline
  \hline
$(\nabla^{\mathrm{g}}_{X}J)Y+(\nabla^{\mathrm{g}}_{Y}J)X= 0 \Longleftrightarrow (\nabla^{\mathrm{g}}_X J) X=0$ & nearly K\"{a}hler type manifolds & K\"{a}hler type manifolds\\
 \hline
$(\nabla^{\mathrm{g}}_{X}J)Y-(\nabla^{\mathrm{g}}_{Y}J)X= 0$ & K\"{a}hler type manifolds & K\"{a}hler type manifolds\\
  \hline
  \hline
\end{tabular}
\end{center}
\caption{$(J^2=\pm1)$-metric manifolds satisfying $(\nabla^{\mathrm{g}}_{X}J)Y\pm(\nabla ^{g}_{Y}J)X= 0$}
\label{table:conditions}
\end{table}

\end{document}